\numberwithin{equation}{section}
\newcommand{\bea}{\begin{eqnarray}}
\newcommand{\eea}{\end{eqnarray}}
\newcommand{\be}{\begin{eqnarray*}}
\newcommand{\ee}{\end{eqnarray*}}
\newtheorem{theorem}{Theorem}[section]
\newtheorem{lemma}{Lemma}[section]
\newtheorem{definition}{Definition}[section]
\newtheorem{example}{Example}[section]
\begin{document}
\title[Nil Evolution Algebras]{Finitely Generated Nil but Not Nilpotent Evolution Algebras}
\author[J. P. Tian]{Jianjun Paul Tian}
\address{Mathematics Department, College of William and Mary, Williamsburg, VA 23187, USA} \email{jtian@wm.edu}
\author[Y. M. Zou]{Yi Ming Zou}
\address{Department of Mathematical Sciences, University of Wisconsin-Milwaukee, Milwaukee, WI 53201, USA} \email{ymzou@uwm.edu}
\thanks{AMS 2000 MSC: 17D92, 92C15}
\begin{abstract}
To use evolution algebras to model population dynamics that both allow extinction and introduction of certain gametes in finite generations, nilpotency must be built into the algebraic structures of these algebras with the entire algebras not to be nilpotent if the populations are assumed to evolve for a long period of time. To adequately address this need, evolution algebras over rings with nilpotent elements must be considered instead of evolution algebras over fields. This paper develops some criteria, which are computational in nature, about the nilpotency of these algebras, and shows how to construct finitely generated evolution algebras which are nil but not nilpotent.   
\end{abstract}
\maketitle
\section{Introduction}
\par
The non-associative algebras that naturally arise as one considers the processes of genetic information getting passed down and evolving through the generations are usually called genetic algebras \cite{Et1, Et2, R, Sch, T, W}. These algebras can be defined by a basis $\{x_{i}:i\in I\}$ over a commutative ring\footnote{The existing theory on these algebras was developed mainly over a field. We will explain the need to consider these algebras over general commutative rings in Section 2.} $R$, where $I$ is a fixed index set which usually is taken to be finite, with the multiplication defined by
\bea\label{e1}
x_{i}x_{j}=\sum_{k\in I}c_{kij}x_{k}, \quad c_{kij}\in R, \quad i,j\in I.
\eea
When these algebras are used to describe a biological population, the basis elements $x_{i},\;i\in I$, represent the gametes of the corresponding population, and the product of two gametes represents the reproduction process. Depending on the extra conditions that one imposes on the coefficients $c_{kij}$, these algebras can be used to model different biological populations and are named differently in the literature. For example, when the coefficients satisfy the extra condition:
\bea
c_{kij}=0,\quad \forall\; i\ne j,
\eea
the resulting algebras are the so-called {\it evolution algebras} \cite{T}, which are used to describe biological systems of non-Mendelian inheritance.
\par
Let $A$ be a genetic algebra with a set of generators $X$. Then the elements in $X$ can be viewed as the original gametes of the population. If some of these original gametes extinct after, say $N$ generations, then the subalgebra generated by $X^{n}$ (see Section 2 for definition) will not contain these gametes for all $n>N$. On the other hand, if new gametes are introduced through the process of evolution, say the first new gamete appears at generation $M$, then the subalgebra generated by $X^{M}$ is not a subspace of the space spanned by original generators in $X$. Therefore, certain nilpotent property is needed if some of the original gametes (generators) disappear later on, and a sufficiently large space is needed to allow the introduction of new gametes for the entire evolution process. It is not uncommon that every gamete extincts after certain generations but the whole population will evolve for a  long period of time, so that we may assume that the population will not extinct. In mathematics, these properties imply that the algebra is nil but not nilpotent. Though there have been studies devoted to these algebras when the coefficient ring is a field, the study of these algebras over an arbitrary ring, to our best knowledge, seems to be virtually nonexistent.  
\par
For evolution algebras over a field, say the real numbers, the extinction of gametes can be handled in two ways \cite{T}: set the coefficients of the corresponding gametes to $0$, so that these gametes are not reproduced in the next generation; or take the limit and let the process go to infinity so that the coefficients of these gametes approach $0$. However, these approaches do not adequately address the important intermediate cases where gametes extinct in finite generations. We will show that nilpotent elements from the base ring is necessary if the extinction process happens in the later generations instead of the second generation. If the associativity holds for the algebra\footnote{As usual, by ``non-associative algebras'' we mean that the associativity is not assumed. For algebras such that associativity fails, we use ``not associative''.}, then the construction of a finitely generated nil but not nilpotent infinite dimensional algebra with finite Gelfand-Kirillov dimension seems to be difficult if the coefficients are taken over a field \cite{LS}. In this short note, our main goal is to address the nil but not nilpotent property motivated by biological systems mentioned before, and we will limit our discussions to the nilpotency properties of evolution algebras over an arbitrary commutative ring. We will first discuss some basic properties on the nilpotency of these evolution algebras, in such a way that these conditions can be checked computationally. Then we will explain how to construct finitely generated nil but not nilpotent evolution algebras over commutative rings that have nilpotent elements. 
\par 
\section{Nilpotent Property}
\par
Let $A$ be a generic algebra with a set of generators $X$ as defined in section 1. We call the $c_{kij}$'s that appear on the right hand side of (\ref{e1}) the {\it structure coefficients} of $A$. We also call the cardinality of $X$ the dimension of $A$, though we should use the term ``rank'' for a free module over a general commutative ring.
\par
The definition of $A$ implies that in general $A$ is not power-associative and non-commutative. For further properties and examples of these algebras when the coefficient ring $R$ is a field, we refer the reader to references \cite{Sch1, T, W}.
\par
We need the notion of {\it principal powers} and {\it plenary powers} of an element in a non-associative commutative algebra. Let $a\in A$, then the principal powers of $a$ are defined by
\be
a, a^{2},  \ldots, a^{i}=a^{i-1}a,\ldots;
\ee
and the plenary powers of $a$ are defined by
\be
a^{[1]}=a^{2}, a^{[2]}=(a^{[1]})(a^{[1]}), \ldots, a^{[i]}=a^{[i-1]}a^{[i-1]}, \ldots.
\ee
We also define the principal powers of the algebra $A$ to be
\be
A, A^{2},\ldots, A^{i}=A^{i-1}A, \ldots.
\ee
\par
\begin{definition}
We call $A$ a nil algebra if for every $a\in A$, $a^{n}=0$ for some $n\in \mathbb{N}$, and we call $A$ nilpotent if $A^{n}=(0)$ for some $n\in \mathbb{N}$.
\end{definition}
\par
Note that in a non-associative algebra, $a^{n}=0$ does not imply that $a^{[n]}=0$ in general. Note also that our definition of a nilpotent algebra is different than the one in \cite{Sch1}, which we term as {\it strongly nilpotent}.
\begin{definition} We call $A$ strongly nilpotent if there exists an integer $n>0$ such that any product $a_1a_2\cdots a_n$ of $n$ elements in $A$, no matter how associated, is $0$.
\end{definition} 
\par
According to the definitions, strongly nilpotent implies nilpotent, but not conversely. 
\par
There is a well-known result on strongly nilpotent algebras over a field. To state this result, for each $a\in A$, we define a linear transformation of $A$ by
\be
L_a : x\longrightarrow ax,\quad \forall\; x\in A.
\ee
We let $\mathcal{L}(A)$ be the associative algebra generated by all $L_a,\;a\in A$, and call it the {\it associated algebra} of $A$. In the case of an evolution algebra, due to the commutativity of the multiplication in $A$, the algebra $\mathcal{L}(A)$ is the same as the associated algebra generated by all the right multiplications defined by the elements of $A$. We have
\bea
(L_aL_b)(x)=L_a(L_b(x)),\quad\forall\; a,b,x\in A.
\eea
However, $L_{ab}\ne L_aL_b$ in general. For each generator $x_i$, we abbreviate $L_{x_i}$ as $L_i$. If $a=\sum_{i}a_ix_i$, then we have
\bea
L_a = \sum_{i}a_iL_i.
\eea
\par
 The following theorem is contained in \cite{Sch1}.
\begin{theorem} 
If the base ring $R$ is a field, the algebra $A$ is strongly nilpotent if and only if the associated algebra $\mathcal{L}(A)$ is nilpotent.
\end{theorem}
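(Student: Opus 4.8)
The plan is to prove the two implications separately, with the forward one being essentially immediate and the converse requiring a combinatorial reduction of arbitrarily bracketed products to chains of left multiplications. For the direction ``$A$ strongly nilpotent $\Rightarrow \mathcal{L}(A)$ nilpotent'', I would argue as follows. Suppose every product of $n$ elements of $A$, however associated, vanishes. Given any $L_{a_1},\ldots,L_{a_{n-1}}$ and any $x\in A$, the operator product $L_{a_1}L_{a_2}\cdots L_{a_{n-1}}$ applied to $x$ equals the right-nested product $a_1(a_2(\cdots(a_{n-1}x)\cdots))$, which is one particular association of a product of $n$ elements and hence is $0$. Since $\mathcal{L}(A)$ is generated as a (non-unital) associative algebra by the $L_a$, every element of $\mathcal{L}(A)^{n-1}$ is a sum of products of at least $n-1$ left multiplications, so $\mathcal{L}(A)^{n-1}=0$. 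This uses neither commutativity nor the field hypothesis.

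For the converse, suppose $\mathcal{L}(A)^m=0$; I want to produce an $n$ such that every product of $n$ elements vanishes. The key device is that any bracketed product $P$ of elements of $A$ can be rewritten as a chain $L_{c_1}L_{c_2}\cdots L_{c_t}(a)$ of left multiplications applied to a single element $a$, where the $c_i\in A$ are subproducts. Concretely, I would view $P$ as a binary tree whose leaves are the factors; at the root $P=U\cdot V$ one may write $P=L_U(V)$ to continue down into $V$, or, using the commutativity of the evolution algebra, $P=L_V(U)$ to continue into $U$. Iterating along a root-to-leaf path, each internal node contributes one factor $L_{c_i}$ (its off-path sibling subtree), terminating at a leaf $a$. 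Choosing the path to a deepest leaf, the length $t$ of the resulting chain equals the height $h$ of the tree. Since a binary tree on $n$ leaves has height $h\ge\lceil\log_2 n\rceil$, taking $n=2^{m-1}+1$ forces $h\ge m$; then $L_{c_1}\cdots L_{c_t}\in\mathcal{L}(A)^t\subseteq\mathcal{L}(A)^m=0$, so $P=0$, and $A$ is strongly nilpotent.

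The main obstacle is precisely the subtlety in the converse that chain length is governed by the \emph{height} of the association tree rather than by its number of leaves: a balanced product such as $(ab)(cd)$ only unfolds to a chain of length $2$, not $3$, because the operator $L_{ab}$ absorbs an entire subtree into a single left multiplication and cannot be split further (recall $L_{ab}\neq L_aL_b$ in general). I would therefore isolate the needed fact as a small combinatorial lemma, namely that any binary tree on $n$ leaves has a root-to-leaf path of length at least $\log_2 n$, and would take care to invoke the commutativity of $A$ (equivalently, that $\mathcal{L}(A)$ coincides with the algebra of right multiplications) each time the deepest leaf lies in the left factor, so that the unfolding can always be steered down the deepest branch.
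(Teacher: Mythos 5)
Your proof is correct, but note that it cannot be matched line-by-line against the paper, because the paper gives no proof of this theorem at all: it is stated as ``contained in \cite{Sch1}'' (Schafer), where the natural setting is the multiplication algebra generated by \emph{both} left and right multiplications of an algebra over a field. Your self-contained argument is sound in both directions: the forward implication via right-nested products $a_1(a_2(\cdots(a_{n-1}x)\cdots))$, and the converse via unfolding an association tree along a path to a deepest leaf, with the height bound $h\ge\lceil\log_2 n\rceil$ guaranteeing a chain of at least $m$ left multiplications once $n=2^{m-1}+1$. You also correctly flag the one real subtlety (chain length is the tree height, not the leaf count) and handle it. Two comparative remarks. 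First, your route shows the field hypothesis is superfluous: the equivalence holds over any commutative ring, which is a pleasant bonus given that the paper's entire program is to work over rings rather than fields. Second, your invocation of commutativity in the converse is not a stylistic choice but a logical necessity: for a non-commutative genetic algebra the statement with left multiplications alone is \emph{false}. For example, take $A$ with basis $\{x,y\}$ and the only nonzero product $yx=y$; then $L_x=0$, $L_y^2=0$, so $\mathcal{L}(A)^2=(0)$ is nilpotent, yet
\begin{equation*}
\underbrace{(\cdots((yx)x)\cdots)x}_{\mbox{$n$ factors}}=y\ne 0
\end{equation*}
for every $n$, so $A$ is not strongly nilpotent. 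Thus one must either assume commutativity---as you do, and as holds for the evolution algebras to which the paper actually applies the theorem---or replace $\mathcal{L}(A)$ by the full multiplication algebra, in which case your tree-unfolding argument goes through verbatim (choosing $L$ or $R$ at each internal node) with no commutativity assumption at all.
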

\par\medskip
From now on, we assume our algebra $A$ is an evolution algebra, i.e. $c_{kij} = 0$ for all $i\ne j$, and we simplify our writing of the structure coefficients to $c_{ki}$.
\par
To motivate our study for evolution algebras over arbitrary commutative rings, we make a simple observation.  
\begin{lemma} Let $A$ be nil. Then for each $1\le i\le n$, there exists a positive integer $k_i$ such that $c_{ii}^{k_i}=0$.
\end{lemma}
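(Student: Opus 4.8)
The plan is to isolate and track the coefficient of $x_i$ as one passes from $x_i$ to its higher principal powers, and to show that this coefficient is simply a power of $c_{ii}$. The starting observation is that in an evolution algebra the off-diagonal products vanish: since $x_kx_i=0$ for $k\ne i$, right multiplication by $x_i$ sends any element $a=\sum_k a_kx_k$ to
\be
a\,x_i=a_i\,x_i^2=a_i\sum_k c_{ki}x_k,
\ee
so that only the $x_i$-component of $a$ contributes to the product $ax_i$.

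With this rule in hand, I would compute the principal powers of $x_i$ inductively. Writing $x_i^m=\sum_k b_k^{(m)}x_k$, the displayed identity applied to $a=x_i^m$ gives the recursion $b_i^{(m+1)}=b_i^{(m)}\,c_{ii}$, with initial value $b_i^{(1)}=1$. Solving it yields $b_i^{(m)}=c_{ii}^{\,m-1}$; in words, the coefficient of $x_i$ in $x_i^m$ equals $c_{ii}^{\,m-1}$. Since $A$ is nil, there is an integer $N$ with $x_i^N=0$, and because $x_i$ is a nonzero basis element we must have $N\ge 2$. Equating the $x_i$-coefficient on both sides of $x_i^N=0$ then forces $c_{ii}^{\,N-1}=0$, so $k_i=N-1$ is the desired positive integer.

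I do not expect a genuine obstacle: the argument is an elementary coordinate computation. The only point deserving care is recognizing that the remaining structure coefficients $c_{ki}$ with $k\ne i$ never re-enter the $x_i$-component, so the recursion for $b_i^{(m)}$ decouples from the other coordinates and depends on $c_{ii}$ alone. This decoupling is exactly what makes the conclusion about $c_{ii}$---rather than about the full collection of $c_{ki}$---available from the mere nilness of $A$.
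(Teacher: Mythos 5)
Your proposal is correct and takes essentially the same approach as the paper: the paper proves the identity $x_i^k=c_{ii}^{k-2}x_i^2$ for $k>2$ (using exactly the decoupling $x_kx_i=0$ for $k\ne i$ that you isolate), while you track its coordinate form $b_i^{(m)}=c_{ii}^{m-1}$. In both cases, setting $x_i^N=0$ and reading off the $x_i$-coefficient yields the required vanishing power of $c_{ii}$.
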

\begin{proof} We have the following formula:
\be
x_i^k=c_{ii}^{k-2}x_i^2,\quad \mbox{for all integers $k>2$},
\ee
which holds in any evolution algebra. If $A$ is nil, then each generator $x_i$ is nilpotent, and the lemma follows from the above formula.
\end{proof}
\par
This lemma implies that if the coefficient ring $R$ is a domain, then in order for $x_i$ to be nilpotent, it is necessary that $c_{ii}=0$. In biology, this means that a gamete does not reproduce itself. To model the situations where $x_i$ reproduces itself for a number of generations but disappears later on, the coefficient ring must contain nilpotent elements. In the rest of this section, we will describe some criteria about the nilpotency of an evolution algebra based on the structure coefficients.
\par
We now derive a criterion for checking whether a finite dimensional evolution algebra is nil. Let 
\be
X=\{x_1,x_2,\ldots,x_N\}.
\ee
 Then we have
\bea
(x_1^2,x_2^2,\ldots,x_N^2) &=& (x_1,x_2,\ldots,x_N)(c_{ki})_{N\times N}\\\nonumber
                       {}  &:\;=& (x_1,x_2,\ldots,x_N)C,
\eea
where the $j$th column of the $N\times N$ matrix $C$ is $(c_{1j},c_{2j},\ldots, c_{Nj})^T$. For each $\alpha=(a_1,a_2,\ldots,a_N)\in R^N$, define an $N\times N$ matrix $C_{\alpha}$ by multiplying $a_j$ to the $j$th column of the structure coefficient matrix $C$:
\bea
C_{\alpha}=(a_jc_{kj}).
\eea 
\begin{theorem} If $X=\{x_1,x_2,\ldots,x_N\}$, then the evolution algebra $A$ is nil if and only if for every $\alpha\in R^N$, there exists a positive integer $k_{\alpha}$ such that
\bea\label{t1}
C_{\alpha}^{k_{\alpha}}\alpha^T=0.
\eea
\end{theorem}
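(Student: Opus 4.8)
The plan is to translate multiplication in the evolution algebra into matrix--vector multiplication, thereby reducing the nil condition to a statement about principal powers of a single element. Fix an arbitrary $a=\sum_{i=1}^{N}a_{i}x_{i}$ and set $\alpha=(a_{1},\ldots,a_{N})$. The heart of the argument is the identity that the coordinate column vector of the $k$th principal power $a^{k}$ equals $C_{\alpha}^{\,k-1}\alpha^{T}$.

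To obtain this, I would first record the basic consequence of the evolution-algebra axiom $x_{i}x_{j}=0$ for $i\ne j$: for any $u=\sum_{i}u_{i}x_{i}$, all cross terms in the product $ua$ vanish, so $ua=\sum_{i}u_{i}a_{i}x_{i}^{2}=\sum_{k}\big(\sum_{i}c_{ki}a_{i}u_{i}\big)x_{k}$. In coordinates this says that the $k$th entry of $ua$ is $\sum_{i}(a_{i}c_{ki})u_{i}=(C_{\alpha}u)_{k}$; that is, right multiplication by $a$ acts on coordinate vectors exactly as the matrix $C_{\alpha}$. This is the one nontrivial observation, and it is where the assumption that $A$ is an evolution algebra is used, since it forces the product to be diagonal in the basis.

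With this in hand the power identity follows by a short induction: the vector of $a^{1}$ is $\alpha^{T}=C_{\alpha}^{0}\alpha^{T}$, and if the vector of $a^{k}$ is $C_{\alpha}^{\,k-1}\alpha^{T}$, then $a^{k+1}=a^{k}a$ has coordinate vector $C_{\alpha}\big(C_{\alpha}^{\,k-1}\alpha^{T}\big)=C_{\alpha}^{\,k}\alpha^{T}$. Consequently $a$ is nilpotent if and only if $C_{\alpha}^{\,m}\alpha^{T}=0$ for some positive integer $m$, the trivial case $a=0$ corresponding to $\alpha=0$. Since $A$ is nil exactly when every $a\in A$ is nilpotent and each $a$ corresponds to a unique $\alpha\in R^{N}$, ranging over all $\alpha$ yields the stated criterion with $k_{\alpha}=m$.

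The main point to state carefully is the coordinate formula for $ua$; once it is established, the induction and the logical equivalence are routine. The only bookkeeping to watch is the off-by-one between the exponent $k$ of the power and the exponent $k-1$ of the matrix, so that the index $k_{\alpha}$ in $(\ref{t1})$ is matched with $m=(\text{power index})-1$.
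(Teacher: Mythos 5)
Your proof is correct and follows essentially the same route as the paper: both establish by induction the coordinate formula $a^{n}=(x_{1},\ldots,x_{N})C_{\alpha}^{\,n-1}\alpha^{T}$ and then read off the nil criterion, with the evolution-algebra condition $x_{i}x_{j}=0$ ($i\ne j$) doing the work in the product computation. Your organization is marginally cleaner in that you isolate the observation that right multiplication by $a$ acts on coordinate vectors as $C_{\alpha}$ and start the induction at $n=1$, but this is a presentational difference, not a different argument.
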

\begin{proof} Each $a\in A$ can be written as
\be
a=(x_1,x_2,\ldots,x_N)\alpha^T
\ee
for some $\alpha\in R^N$. We use induction on $n$ to prove the following formula:
\bea\label{p1}
a^n=(x_1,x_2,\ldots,x_N)C_{\alpha}^{n-1}\alpha^T,\;\;\mbox{$\forall\; n\ge 2$}.
\eea
For $n=2$, we have
\be
a^2 &=& \sum_{i=1}^Na_i^2x_i^2=\sum_{i,k=1}^Na_i^2c_{ki}x_k=\sum_{i,k=1}^Nx_k(c_{ki}a_i)a_i\\
 {} &=& (x_1,x_2,\ldots,x_N)C_{\alpha}\alpha^T.
\ee
Thus the formula holds in this case. Assume that the formula holds for $n\ge 2$. Then
\be
a^{n+1} &=& (a^n)a=((x_1,x_2,\ldots,x_N)C_{\alpha}^{n-1}\alpha^T)a\\
     {} &=& ((x_1,x_2,\ldots,x_N)\beta^T)a,
\ee
where 
\be
\beta^T=(b_1,b_2,\ldots,b_N)^T :=C_{\alpha}^{n-1}\alpha^T.
\ee
Now
\be
a^{n+1} &=& (\sum_{i=1}^Nb_ix_i)(\sum_{j=1}^Na_jx_j)=\sum_{i=1}^Nb_ia_ix_i^2\\
     {} &=& \sum_{i=1}^Nb_ia_i\sum_{j=1}^Nc_{ji}x_j\\
     {} &=& (x_1,x_2,\ldots,x_N)C_a\beta^T\\
     {} &=& (x_1,x_2,\ldots,x_N)C_a^n\alpha^T.
\ee
Thus (\ref{p1}) holds for all $n\ge 2$. Now the theorem follows from (\ref{p1}).
\end{proof}
\par
As for nilpotent evolution algebras, we have
\begin{theorem} The evolution algebra $A$ (finite or infinite dimensional) is nilpotent, i.e. $A^n=(0)$ for some positive integer $n$, if and only if for any sequence of indexes $i_1, i_2, \ldots, i_n$ the structure coefficients satisfy
\bea\label{t2}
c_{i_ni_{n-1}}c_{i_{n-1}i_{n-2}}\cdots c_{i_2i_1}=0.
\eea
\end{theorem}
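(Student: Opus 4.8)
The plan is to reduce the vanishing of the principal power $A^n$ to the vanishing of all left-normed products of $n$ generators, and then to evaluate such a product explicitly using the defining relation $x_ix_j=\delta_{ij}x_i^2=\delta_{ij}\sum_k c_{ki}x_k$ of an evolution algebra. First I would argue that $A^n$ is spanned, as an $R$-module, by the left-normed products $(\cdots((x_{a_1}x_{a_2})x_{a_3})\cdots x_{a_n})$ of $n$ generators. This follows by induction on $n$ from the definition $A^n=A^{n-1}A$ together with bilinearity: if $A^{n-1}$ is spanned by left-normed products of $n-1$ generators, then every element of $A^{n-1}A$ is an $R$-combination of (such a product) times a generator, and attaching one more generator on the right keeps the product left-normed of length $n$. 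Hence $A^n=(0)$ if and only if every left-normed product of $n$ generators vanishes.

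The key computation is then to establish, by induction on $n$, the telescoping formula
\be
(\cdots((x_{a_1}x_{a_2})x_{a_3})\cdots x_{a_n}) &=& \delta_{a_1a_2}\sum_{m} c_{ma_n}c_{a_na_{n-1}}\cdots c_{a_3a_1}\,x_m.
\ee
The point is that the first multiplication $x_{a_1}x_{a_2}$ already forces $a_1=a_2$ (otherwise the entire product is $0$), while each subsequent right multiplication by $x_{a_k}$ acts on a sum $\sum_{\ell} c_{\ell\,\cdot}\,x_\ell$ through the rule $x_\ell x_{a_k}=\delta_{\ell a_k}\sum_m c_{ma_k}x_m$, which merely selects the term $\ell=a_k$ and appends one more structure coefficient. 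The induction thus produces precisely the displayed chain of coefficients.

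Finally I would match indices with the statement. Setting $i_1=a_1=a_2$, $i_2=a_3,\ldots,i_{n-1}=a_n$ and $i_n=m$, the coefficient of $x_{i_n}$ above is exactly $c_{i_ni_{n-1}}c_{i_{n-1}i_{n-2}}\cdots c_{i_2i_1}$; and as the $a$'s and $m$ range over all indices, the tuple $(i_1,\ldots,i_n)$ ranges over all sequences of length $n$. Therefore every left-normed product of $n$ generators vanishes if and only if condition (\ref{t2}) holds, which combined with the first step proves the theorem. Since each element of $A$ is a finite combination of the $x_i$ and the formula is independent of $|I|$, the argument covers the infinite-dimensional case verbatim.

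The step I expect to require the most care is the spanning claim in the non-associative setting: because the principal power is defined strictly as $A^{n-1}A$, one must verify that only left-normed associations arise (arbitrary bracketings are not needed here, in contrast to the strongly nilpotent notion), and one must keep careful track of the ``collapse'' $a_1=a_2\mapsto i_1$, which is exactly what makes the number of free indices equal $n$ rather than $n+1$ and thereby gives the correct count of $n-1$ factors in (\ref{t2}).
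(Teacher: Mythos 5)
Your proposal is correct and follows essentially the same route as the paper: both reduce $A^n=(0)$ to the vanishing of left-normed products of $n$ generators and evaluate such a product as a chain of structure coefficients (your telescoping formula is exactly the paper's identity $(((x_{i_1}^2)x_{i_2})\cdots x_{i_{n-1}})=c_{i_{n-1}i_{n-2}}\cdots c_{i_2i_1}\sum_{k}c_{ki_{n-1}}x_k$, with the collapse $a_1=a_2$ made explicit via $\delta_{a_1a_2}$). Your write-up is somewhat more explicit about the spanning claim for $A^{n-1}A$ and the index bookkeeping, but the underlying argument is the same.
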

\begin{proof}
If $A^n=(0)$, then we have 
\bea\label{p2}
\underbrace{(((ab)c)\cdots)}_{\mbox{$n$ terms}}=0,\quad a,b,c,\ldots \in A.
\eea
In particular, we have for any sequence of indexes $i_1, i_2, \ldots, i_{n-1}$
\bea\label{p3}\quad
(((x_{i_1}^2)x_{i_2})\cdots x_{i_{n-1}})=c_{i_{n-1}i_{n-2}}\cdots c_{i_2i_1}\sum_{k}c_{ki_{n-1}}x_k=0,
\eea
which implies (\ref{t2}). Conversely, (\ref{t2}) implies (\ref{p3}). Since the left hand side of (\ref{p2}), after expressing each term as a linear combination of the $x_i$'s and multiplying out, is a sum of terms similar to the one on the left hand side of (\ref{p3}), it must equal to $0$, i.e. $A$ is nilpotent.
\end{proof}
\par
Let
\be
I_1=span_R\{x_i\in X\;|\;x_i^2=0\}.
\ee
Then $I_1$ is an ideal of $A$ and $AI_1=(0)$.
\begin{lemma}
The evolution algebra $A$ is nilpotent if and only if the quotient algebra $A/I_1$ is nilpotent.
\end{lemma}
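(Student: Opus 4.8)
The plan is to prove the two implications separately, leaning on the structural facts already recorded: that $I_1$ is an ideal of $A$ and that $AI_1=(0)$ (and, by commutativity of the evolution algebra, $I_1A=(0)$ as well).

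The forward implication is routine and uses nothing special about $I_1$. If $A^n=(0)$, let $\pi:A\to A/I_1$ be the canonical projection; then $(A/I_1)^n=\pi(A^n)=(0)$, so $A/I_1$ is nilpotent. Here I am using the identity $(A/I_1)^m=\pi(A^m)$, which follows by induction from $\pi(a)\pi(b)=\pi(ab)$: indeed $(A/I_1)^m=(A/I_1)^{m-1}(A/I_1)=\pi(A^{m-1})\pi(A)=\pi(A^{m-1}A)=\pi(A^m)$, so no reassociation issues arise despite non-associativity.

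For the converse, suppose $A/I_1$ is nilpotent, say $(A/I_1)^n=(0)$. By the same identity, $(A/I_1)^n=\pi(A^n)=(A^n+I_1)/I_1$, so the hypothesis is equivalent to $A^n\subseteq I_1$. The key step is then to multiply once more by $A$ and invoke the annihilator property: using $A^{n+1}=A^nA$ together with $I_1A=(0)$, we obtain $A^{n+1}=A^nA\subseteq I_1A=(0)$. Hence $A^{n+1}=(0)$ and $A$ is nilpotent.

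The only point requiring care---and the main, if minor, obstacle---is justifying $I_1A=(0)$ in the non-associative setting; everything else is formal. For a generator $x_i$ with $x_i^2=0$ one checks that $x_ix_j=0$ for every $j$: the cross terms vanish because $A$ is an evolution algebra ($c_{kij}=0$ for $i\ne j$), while $x_ix_i=x_i^2=0$ by assumption. Thus every spanning generator of $I_1$ annihilates every generator of $A$, and bilinearity gives $I_1A=(0)$; commutativity then yields $AI_1=(0)$ as noted in the text. With this in hand the computation $A^{n+1}\subseteq I_1A=(0)$ closes the argument.
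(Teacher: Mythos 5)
Your proof is correct and takes essentially the same route as the paper's: the forward direction is the standard fact that quotients of nilpotent algebras are nilpotent, and the converse uses $(A/I_1)^n=(0)\Rightarrow A^n\subseteq I_1$ followed by $A^{n+1}=A^nA\subseteq I_1A=(0)$. The only difference is that you explicitly verify the annihilator property $I_1A=AI_1=(0)$ and the identity $(A/I_1)^m=\pi(A^m)$, both of which the paper asserts without proof (the former in the remark immediately preceding the lemma); this added care is harmless and, if anything, makes the argument more self-contained.
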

\begin{proof}
If $A$ is nilpotent, then all its quotients are nilpotent. If $A/I_1$ is nilpotent, then there is an $n$ such that $(A/I_1)^n=(0)$, which implies that $A^n\subseteq I_1$. By the comment just before the lemma, $A^{n+1}\subseteq AI_1=(0)$, so $A$ is nilpotent.
\end{proof}
\par
Note that $A/I_1$ is spanned by the images of those $x_i$ such that $x_i^2\ne 0$. If 
\be
X'=\{x_i\;|\; x_i^2\ne 0\;\mbox{and}\; x_i^2\in I_1\}\ne \emptyset
\ee
then 
\be
I_2=span(I_1\cup X')\supsetneq I_1
\ee
is an ideal of $A$ and $I_2^2\subseteq I_1$. Continuing this way, we get a filtration of ideals of $A$:
\be
A\supset\cdots\supset I_2\supset I_1\supset I_0=(0),
\ee 
such that $(I_{i+1}/I_i)^2=(0)$ for $i\ge 0$. If $A$ is finite dimensional, then there are two possibilities: The above process produces a complete filtration, i.e. 
\be
A=I_k\supset I_{k-1}\supset \cdots\supset I_2\supset I_1\supset I_0=(0)
\ee 
such that $(I_{i+1}/I_i)^2=(0)$ for $i\ge 0$; or for some index $s$, $A/I_s\ne (0)$ and 
\be
\{x_i\;|\; x_i^2\ne 0\;\mbox{and}\; x_i^2\in I_s\} = \emptyset.
\ee
\par
In the first case, we rearrange the generators $x_i,\;1\le i \le N$, if necessary, such that 
\be
I_0 &=& span\{x_1,\ldots, x_{i_0}\},\\
I_1 &=& span\{x_{i_0+1},\ldots, x_{i_1}\},\\
{} &\vdots & \\
I_k &=& span\{x_{i_{k-1}+1},\ldots, x_{i_k}\}.
\ee
Under this order of the generators, the structure coefficient matrix $C$ of $A$ is a strict upper triangular matrix.
\par
In the second case, let $B=A/I_s$ and let $y_i=\overline{x}_i$ be the image of $x_i$ in $B$. Then $B$ is spanned by
\be
Y=\{y_i\;|\; x_i\notin I_s\}:=\{y_1,y_2,\ldots,y_m\},
\ee
and $y_i^2\ne 0,\;\forall\; y_i\in Y$. If in addition we have that the coefficient ring $R$ is a domain, then $Y$ is not nilpotent. This can be seen as follows. Since $y_1^2\ne 0$, there is a $c_{i_11}\ne 0$, thus
\be
(y_1^2)y_{i_1}=c_{i_11}y_{i_1}^2.
\ee
Since $y_{i_1}^2\ne 0$, there is a $c_{i_2i_1}\ne 0$, and so on, we can obtain a sequence of nonzero elements of arbitrary length such that their product is nonzero since $R$ is a domain. Thus by Theorem 2.4, $B$ is not nilpotent. Summarizing our discussion, we have the following theorem.
\begin{theorem}
If $R$ is a domain and $A$ is finite dimensional over $R$, then $A$ is nilpotent if and only if there is an ordering of the generators $x_i,\;1\le i\le N$, such that under this ordering, the structure coefficient matrix $C$ of $A$ is strictly upper triangular.
\end{theorem}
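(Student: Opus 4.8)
The plan is to make Theorem 2.4 the workhorse for both implications, reading off nilpotency from the vanishing of all products $c_{i_ni_{n-1}}\cdots c_{i_2i_1}$, and to lean on the ideal filtration constructed in the paragraphs preceding the statement for the harder direction.

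For the direction assuming strict upper triangularity, I would order the generators so that $c_{ki}=0$ whenever $k\ge i$. Then in any product $c_{i_ni_{n-1}}c_{i_{n-1}i_{n-2}}\cdots c_{i_2i_1}$ each nonzero factor $c_{i_ji_{j-1}}$ forces $i_j<i_{j-1}$, so a nonzero product of length $n-1$ would require a strictly decreasing chain $i_n<i_{n-1}<\cdots<i_1$ of indices drawn from $\{1,\ldots,N\}$. As soon as $n>N$ no such chain exists, hence every such product vanishes and Theorem 2.4 gives $A^{N+1}=(0)$. This implication never uses that $R$ is a domain.

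For the converse I would run the filtration $A\supset\cdots\supset I_2\supset I_1\supset I_0=(0)$ with $(I_{i+1}/I_i)^2=(0)$ already built before the theorem, and argue that nilpotency forces the \emph{first case}, a complete filtration $A=I_k\supset\cdots\supset I_0=(0)$. In the \emph{second case} one has a nonzero proper quotient $B=A/I_s$ in which every generating image satisfies $y_i^2\ne 0$; since $R$ is a domain, starting from $y_1^2\ne 0$ one selects a nonzero coefficient, passes to some $y_{i_1}$ with $y_{i_1}^2\ne 0$, and repeats, producing nonzero products $c_{i_{j+1}i_j}\cdots c_{i_11}$ of arbitrary length. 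By Theorem 2.4 this makes $B$, and hence $A$, not nilpotent, contradicting the hypothesis. So the filtration is complete, and ordering the generators layer by layer (those spanning $I_1$ first, then $I_2/I_1$, and so on up to $A=I_k$) places $x_i^2$ in the span of strictly earlier generators, rendering $C$ strictly upper triangular.

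I expect the only genuine obstacle to be this last step ruling out the second case: one must check that the integral-domain hypothesis really lets the chain of nonzero coefficients continue indefinitely, i.e. each $y_{i_j}^2\ne 0$ supplies a fresh nonzero $c_{i_{j+1}i_j}$ and, because a product of nonzero elements of a domain is nonzero, the accumulated product stays nonzero, so Theorem 2.4 is violated for $B$. The forward implication and the bookkeeping of the reordering are then routine.
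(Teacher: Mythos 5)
Your proposal is correct and follows essentially the same route as the paper: the paper's own proof is exactly the filtration dichotomy you describe (complete filtration giving the layer-by-layer ordering, versus a quotient $B=A/I_s$ with all $y_i^2\ne 0$, where the domain hypothesis and Theorem 2.4 produce arbitrarily long nonzero coefficient products). Your explicit treatment of the easy direction via strictly decreasing index chains is the only addition---the paper leaves that implication implicit in its ``summarizing our discussion'' remark---and your observation that it needs no domain hypothesis is accurate.
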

\par\medskip
\section{Examples}
Using the theorems of Section 2, we can construct examples of evolution algebras with desired properties easily.
\par
\begin{example}
Let $R=\mathbb{Z}_{36}$, let $X=\{x_1, x_2\}$, and let the structure coefficient matrix be
\be
\left(\begin{array}{cc}
6 & 3\\
2 & 12
\end{array}
\right).
\ee
Then the evolution algebra $A$ is nilpotent: $A^5=(0)$.
\end{example}
\par
\begin{example}
Again, let $R=\mathbb{Z}_{36}$, let $X=\{x_1, x_2\}$. But let the structure coefficient matrix be
\be
\left(\begin{array}{cc}
6 & 2\\
2 & 12
\end{array}
\right).
\ee
Then the evolution algebra $A$ is not nilpotent, since the product 
\be
\underbrace{c_{21}c_{12}c_{21}c_{12}\cdots}_{\mbox{$n$ terms}}=2^n
\ee
is never zero, so Theorem 2.3 implies the result. In fact, $A$ is not even nil since $a = x_1+x_2$ is not nilpotent.
\end{example}
\par
\medskip
For associative algebras over a field, the problem of constructing a finitely generated infinite dimensional nil but not nilpotent algebra with finite Gelfand-Kirillov dimension seems to be complicated: the sole purpose of \cite{LS} is to construct such an example. However, it is quite easy to construct a nil but not nilpotent infinite dimensional evolution algebra which is singly generated over a ring. In view of possible interests from different applications, we give such an example next. This algebra $A$ is defined over $R=\mathbb{Z}_{4}$ with basis $X=\{x_{i}:i\in \mathbb{N}\}$ and with the defining relations (\ref{e1}) specified to:
\bea\label{e3}
x_{i}x_{j}=\left\{ \begin{array}{rl} 
2x_{i}+x_{i+1},& i=j,\\
0, & i\ne j; \end{array}\right. \quad \forall\; i,j\in \mathbb{N}.
\eea
\par
Note that from (\ref{e3}), we immediately see that the subalgebra of $A$ generated by $x_1$ is equal to $A$.
\par
The choice of the base ring $\mathbb{Z}_{4}$ is for the reason of simplicity of the presentation here. It will be clear that the base ring can be replaced by other commutative rings with nilpotent elements. In particular, one can take $\mathbb{Z}_2^{m}$, where $m$ is any integer $\ge 2$, as the base ring. We will provide another example at the end of section 4.
\begin{theorem}
The evolution algebra $A$ defined over $\mathbb{Z}_4$ with basis $X=\{x_i\;:\;i\in\mathbb{N}\}$ subjected to relation (\ref{e3}) is a nil but not nilpotent algebra generated by $x_{1}$. Furthermore, $A$ is not associative.
\end{theorem}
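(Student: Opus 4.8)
The statement has three parts: that $A$ is not nilpotent, that $A$ is nil, and that $A$ is not associative. The plan is to dispose of non-nilpotency and non-associativity first, since each reduces to a single explicit computation, and then concentrate on the nil property, which is where the real work lies. Throughout I would read off the structure coefficients of (\ref{e3}): $c_{ii}=2$, $c_{i+1\,i}=1$, and $c_{ki}=0$ otherwise.

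For non-nilpotency I would apply the criterion (\ref{t2}) of Theorem 2.3 to the ``staircase'' sequence $i_t=t$. The relevant product telescopes, $c_{n\,n-1}c_{n-1\,n-2}\cdots c_{2\,1}=1^{\,n-1}=1\neq 0$, so condition (\ref{t2}) fails for every $n$; hence no power $A^n$ vanishes and $A$ is not nilpotent. For non-associativity I would exhibit a single nonvanishing associator: since $x_1x_2=0$ we get $x_1(x_1x_2)=x_1\cdot 0=0$, whereas $(x_1x_1)x_2=(2x_1+x_2)x_2=2(x_1x_2)+x_2^2=2x_2+x_3\neq 0$, so the algebra is not associative.

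The core is to show $A$ is nil. Fix $a=\sum_{i=1}^N a_ix_i$ with $a_N\neq 0$, and write $a^n=\sum_i c^{(n)}_i x_i$. The key structural observation, which replaces the finite-dimensionality hypothesis of Theorem 2.2, is that every principal power $a^n$ ($n\ge 2$) lies in the fixed finite-dimensional space $V=\mathrm{span}_{\mathbb{Z}_4}\{x_1,\dots,x_{N+1}\}$: indeed $a^{n}a=\sum_i c^{(n)}_i a_i(2x_i+x_{i+1})$, and the coefficient $c^{(n)}_i a_i$ vanishes for $i>N$ because $a_i=0$ there, so only $x_1^2,\dots,x_N^2$ contribute and the top index never exceeds $N+1$. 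Granting this, the computation behind formula (\ref{p1}) goes through verbatim on the truncated $(N+1)\times(N+1)$ matrix $M$ obtained from $C_\alpha$: here $M$ is lower bidiagonal, with diagonal $(2a_1,\dots,2a_{N+1})$ and subdiagonal $(a_1,\dots,a_N)$, and $a^n=(x_1,\dots,x_{N+1})M^{n-1}\alpha_0^T$ with $\alpha_0=(a_1,\dots,a_N,0)$. It then suffices to prove $M$ is nilpotent. Writing $M=2\Lambda+S$ with $\Lambda=\mathrm{diag}(a_j)$ and $S$ the strictly lower subdiagonal part, I would expand $M^k$ into words in $2\Lambda$ and $S$. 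Because $2$ is a central scalar, any word containing two or more factors $2\Lambda$ carries $2^2=4=0$ and dies; the surviving words are $S^k$ and $S^a(2\Lambda)S^b$ with $a+b=k-1$. Since $S$ is a strictly lower triangular $(N+1)\times(N+1)$ matrix we have $S^{N+1}=0$, so for $k\ge 2N+2$ every surviving word vanishes and $M^{2N+2}=0$; hence $a^{2N+3}=0$. As $a$ was arbitrary, $A$ is nil.

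The main obstacle is the infinite dimension: Theorem 2.2 is stated for a finite basis, and the naive worry is that repeated squaring pushes the support of $a$ off to infinity. The point that defuses this is precisely the degree bound $\deg(a^n)\le N+1$, which confines the dynamics to a finite block and lets the $\mathbb{Z}_4$-nilpotency of the resulting bidiagonal matrix finish the argument. The only mild care needed is that $V$ is \emph{not} itself a subalgebra (since $x_{N+1}^2\notin V$), so one must reason with the powers of the specific element $a$ rather than with a subalgebra; the truncation $M$ is legitimate exactly because the $(N+2,N+1)$ entry of $C_\alpha$ equals $a_{N+1}=0$, so the action never escapes $V$.
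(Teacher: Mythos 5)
You parse the theorem as three claims, but the statement makes four: $A$ is also asserted to be \emph{generated by} $x_1$, and your proposal never addresses this. In the paper, that clause is exactly what the plenary-power induction $x_1^{[n]}=2x_n+x_{n+1}$ is for: it shows $x_{n+1}=x_1^{[n]}-2x_n$ lies in the subalgebra generated by $x_1$, so that subalgebra is all of $A$; the same computation then doubles as the non-nilpotency proof, since the plenary powers sit inside increasing principal powers of the algebra ($A^{[n]}\subseteq A^{n+1}$, using $A^2\subseteq A$) and never vanish. The omission is trivial to repair in your framework --- from (\ref{e3}), $x_{i+1}=x_i^2-2x_i$, so induction puts every $x_i$ in the subalgebra generated by $x_1$ --- but as written this is a gap against the statement as posed.

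The three claims you do prove are proved correctly, and by routes genuinely different from the paper's. For non-nilpotency you invoke Theorem 2.3 with the staircase product $c_{n\,n-1}\cdots c_{2\,1}=1\ne 0$, which is legitimate since that theorem covers infinite-dimensional evolution algebras; the paper instead gets it for free from the plenary powers. For non-associativity you exhibit the associator $(x_1x_1)x_2\ne x_1(x_1x_2)$, while the paper shows the stronger failure of power-associativity ($x_1^4=0$ but $x_1^{[2]}\ne 0$). The real divergence is in the nil part: the paper argues inside the algebra, introducing the ideals $A_j=\mathrm{span}\{x_i : i\ge j\}$ and proving by induction that $a^{3+2j}\in A_{j+2}$, so that once the power passes the support of $a$ one further multiplication kills it (giving $a^{2N+2}=0$). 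You instead linearize: you confine all powers of $a$ to the finite block $V=\mathrm{span}\{x_1,\dots,x_{N+1}\}$, extend formula (\ref{p1}) by truncation --- correctly justified, since every column of $C_\alpha$ past the $N$th vanishes --- and then kill the bidiagonal matrix $M=2\Lambda+S$ by expanding words and using that $2^2=0$ in $\mathbb{Z}_4$ while $S^{N+1}=0$. Both arguments run on the same two mechanisms (the index shift and the nilpotency of $2$), but yours has the merit of tying the example back to the matrix criterion of Theorem 2.2 and of producing an explicit uniform index $a^{2N+3}=0$ (marginally weaker than the paper's $a^{2N+2}=0$, which is immaterial), whereas the paper's filtration argument is more elementary and self-contained.
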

\begin{proof}
\par
First we show that $A$ is not associative. By definition we have 
\be
x_{1}^{2}=2x_{1}+x_{2},\quad x_{1}^{3}=2x_{2}, \quad x_{1}^{4}=0,
\ee
and
\be
(x_{1}^{2})(x_{1}^{2})=(2x_{1}+x_{2})(2x_{1}+x_{2})=x_{2}^{2}=2x_{2}+x_{3}.
\ee 
Thus the algebra $A$ is not associative (in fact, not even power-associative).
\par
Next we show that $A$ is generated by $x_{1}$. More precisely, we claim that
\be
x_{1}^{[n]}=2x_{n}+x_{n+1},\quad\forall\; n\ge 1.
\ee
This is true for $n=1$ by the definition of the plenary powers. Assume that it holds for $n$, then 
\be
x_{1}^{[n+1]} &=&(x_{1}^{[n]})(x_{1}^{[n]})\\
&=&(2x_{n}+x_{n+1})(2x_{n}+x_{n+1})\\
&=&x_{n+1}x_{n+1}=2x_{n+1}+x_{n+2}.
\ee
Thus $A$ is generated by $x_{1}$. Note this also shows that $A$ is not nilpotent.
\par
Now we show that $A$ is nil. For an arbitrary element $a\in A$, we can always write 
\bea\label{e4}
a=\sum_{1\le i\le t}a_{i}x_{i},
\eea
where $a_{i}\in R$ (since we can always insert terms with zero coefficients). For $j\ge 1$, let
\be
A_{j}=span\{x_{i}:i\ge j\}.
\ee
Then each $A_{j}$ is an ideal of $A$ and 
\bea\label{e5}
& A_{j}x_{i}=(0), \quad \forall\; 1\le i<j.
\eea
We claim that for the element $a$ specified by (\ref{e4}) we have
\bea\label{e6}
a^{3+2j}\in A_{j+2},\quad \forall\; j\ge 0.
\eea
The following computations show that (\ref{e6}) holds for $j=0$:
\be
a^{2} \equiv  (a_{1}x_{1})^{2}\; \mbox{(mod $A_{2}$)}\;
= a_{1}^{2}(2x_{1}+x_{2}),
\ee
\be
a^{3} \equiv 2a_{1}^{3}x_{2}\equiv 0 \quad \mbox{(mod $A_{2}$)}.
\ee
Assume (\ref{e6}) holds for $j=k$. Then we have
\be
a^{3+2k}\equiv b_{k+2}x_{k+2} \quad \mbox{(mod $A_{k+3}$)},
\ee
for some $b_{k+3}\in R$. Hence
\be
& & a^{3+2(k+1)} =((a^{3+2k})a)a\\
&\equiv& ((b_{k+2}x_{k+2})(a_{k+2}x_{k+2})) (a_{k+2}x_{k+2}) \quad \mbox{(mod $A_{k+3}$)}\\
&\equiv&   0 \quad \mbox{(mod $A_{k+3}$)}.
\ee
Therefore (\ref{e6}) holds as claimed. Now let $n=2t+1$, then (\ref{e6}) implies that $a^{n}\in A_{t+1}$. Thus (\ref{e5}) implies $a^{n+1}=0$. This completes the proof of Theorem 3.1.
\end{proof}
\par\medskip
The above arguments work for arbitrary commutative rings with nilpotent elements, for example, we have:
\begin{example}
Let $\mathbb{R}$ be the set of real numbers and let $\mathbb{R}[t]$ be the set of polynomials with real coefficients. Consider $R=\mathbb{R}[t]/(t^2)$. Then the evolution algebra $A$ defined over $R$ with basis $X=\{x_{i}:i\in \mathbb{N}\}$ and the defining relations:
\bea
x_{i}x_{j}=\left\{ \begin{array}{rl} 
tx_{i}+x_{i+1},& i=j,\\
0, & i\ne j; \end{array}\right. \quad \forall\; i,j\in \mathbb{N},
\eea
is nil but not nilpotent.
\end{example}
\par\medskip
It is now clear that one can construct finitely generated nil but not nilpotent evolution algebras with any number of generators: since for an evolution algebra, $x_ix_j = 0$ if $i\ne j$, one can easily combine singly generated nil but not nilpotent evolution algebras together to form the algebra desired. 
\medskip
\section{Concluding Remark}
Genetic algebras over fields have been studied in detail in the literatures. In modeling biological systems, one also needs to consider genetic algebras over arbitrary commutative rings. Our goal here is to address the need of using evolution algebras to model populations such that certain gametes extinct in the process but the population evolves for a long period of time. This cannot be adequately addressed by using evolution algebras over fields. This motivates our consideration of evolution algebras over general commutative ring, and leads to our construction of finitely generated nil but not nilpotent evolution algebras over commutative rings which have nilpotent elements. A general theory about genetic algebras over arbitrary commutative rings is desirable for applications and deserves further attention.


\begin{thebibliography}{9999}
\bibitem{Et1} I. M. H. Etherington, Genetic algebras, Proceedings of the Royal Society of Edinburgh, {\bf 59} (1939), 242-258.
\bibitem{Et2} I. M. H. Etherington, Non-associative algebra and the symbolism of genetics, Proceedings of the Royal Society of Edinburgh, {\bf 61 B} (1941), 24-42.
\bibitem{LS} T. H. Lenagan, \& A. Smoktunowicz, An infinite dimensional affine nil algebra with finite Gelfand-Kirillov dimension, J. of Amer. Math. Soc., {\bf 20} (2007), no. 4, 989--1001 (electronic).
\bibitem{R} M. L. Reed, Algebraic structure of genetic inheritance, Bull. Amer. Math. Soc. {\bf 34} (1997), 107-130.
\bibitem{Sch} R. D. Schafer, Structure of genetic algebras, Amer. J. of Math., {\bf 71} (1949), no. 1, 121-135.
\bibitem{Sch1} R. D. Schafer, An Introduction to Nonassociative Algebras, 1960 (ebook, available online).
\bibitem{T} J. P. Tian, Evolution Algebras and Their Applications, Lecture Notes in Mathematics, 1921. Springer, Berlin, 2008.
\bibitem{W} A. W\"{o}rz-Busekros, Algebras in Genetics, Lecture Notes in Biomathematics, Vol. 36, Springer-Verlag, New York, 1980.
\end{thebibliography}
\end{document}